\newtheorem{theorem}{Theorem}[section]
\newtheorem{prop}[theorem]{Proposition}
\newtheorem{lem}[theorem]{Lemma}
\newtheorem{coro}[theorem]{Corollary}
\newtheorem{thm}[theorem]{Theorem}
\newtheorem{rem}[theorem]{\rm\textsc{Remark}}
\newtheorem{exam}[theorem]{\rm\textsc{Example}}
\newtheorem{conv}[theorem]{Convention}
\newcommand{\cv}[2]{\ensuremath{\left[\begin{smallmatrix} #1\\#2\end{smallmatrix}\right]}}
\newcommand{\cvv}[3]{\ensuremath{\left[\begin{smallmatrix} #1\\#2\\#3\end{smallmatrix}\right]}}
\DeclareMathOperator{\ad}{ad}
\DeclareMathOperator{\GL}{GL}
\DeclareMathOperator{\cha}{char}
\DeclareMathOperator{\dia}{diag}
\newcommand{\M}{\mathcal{M}} 
\newcommand{\C}{\mathbb{C}} 
\newcommand{\PP}{\mathbb{P}} 
\newcommand{\F}{\mathbb{F}} 
\newcommand{\N}{\mathbb{N}} 
\newcommand{\R}{\mathbb{R}} 
\newcommand{\g}{\mathfrak{g}} 
\newcommand{\ra}{\longrightarrow}
\newcommand{\wt}{\widetilde}
\newcommand{\hbo}{$\hfill\Diamond$} 
\begin{document}
\title{Some Lie Algebra Structures on Symmetric Powers} 
\def\shorttitle{Some Lie Algebra Structures on Symmetric Powers}

\author{Yin Chen}
\address{Faculty of Sciences, Algoma University, Brampton, ON, Canada, L6V 1A3 \& (Current address) Department of Finance and Management Science, University of Saskatchewan, Saskatoon, SK, Canada, S7N 5A7}
\email{yin.chen@usask.ca}

\begin{abstract}
Let $k$ be a field of any characteristic, $V$ a finite-dimensional vector space over $k$, and $S^d(V^*)$
be the $d$-th symmetric power of the dual space $V^*$. Given a linear map $\varphi$ on $V$ and an eigenvector $w$ of $\varphi$, we prove that the pair $(\varphi, w)$ can be used to construct a new Lie algebra structure on $S^d(V^*)$. We prove that this Lie algebra structure is solvable, and in particular, it is nilpotent if $\varphi$ is a nilpotent map. We also classify the Lie algebras for all possible pairs $(\varphi, w)$, when $k=\mathbb{C}$ and $V$ is two-dimensional. 
\end{abstract}

\date{\today}
\thanks{2020 \emph{Mathematics Subject Classification}. 17B05;17B30.}
%\subjclass[2010]{13A50.}
\keywords{Symmetric powers; solvable Lie algebras; group actions.}
\maketitle \baselineskip=16pt

%%%%%%%%%%%%%%%%%%%%%%%%%%%Contents%%%%%%%%%%%%%%%%%%%%%%%%
%\textcolor{blue}{\tableofcontents{}}
\dottedcontents{section}[1.16cm]{}{1.8em}{5pt}
\dottedcontents{subsection}[2.00cm]{}{2.7em}{5pt}
%\dottedcontents{subsubsection}[2.86cm]{}{3.4em}{5pt}

%%%%%%%%%%%%%%%%%%%%%%%%%%%Sections%%%%%%%%%%%%%%%%%%%%%%%%
\section{Introduction}
\setcounter{equation}{0}
\renewcommand{\theequation}
{1.\arabic{equation}}
\setcounter{theorem}{0}
\renewcommand{\thetheorem}
{1.\arabic{theorem}}

\noindent Fix a finite-dimensional vector space $V$ over a field $k$ and consider the variety of certain algebraic structures on $V$ such as linear transformations, bilinear forms, algebras, modules, or special representations, etc. The fundamental tasks include
classifying such algebraic structures up to an equivalence relation that usually needs to be realized by a group action, and finding a way to construct a set of representatives for the equivalence classes. Polynomial invariant theory that dates back to Hilbert's famous 14th problem and Noether's celebrated work plays a crucial role in addressing the first task; see \cite{DK15} or \cite{CW11} for general references of invariant theory. Specifically, invariant polynomials under a group action can be used to separate the equivalence classes while the difficulty in computing these separating invariants depends on the number of parameters used to describe such algebraic structures; see for example \cite{CCSW19, Che18, CSW21}, and \cite{CDG20}. Hence, reducing the number of parameters is the core part of applying invariant theory in classifying algebraic structures. 

Lie algebras are a kind of significant algebraic structure, establishing connections among various fields in geometry and algebra such as Lie groups and algebraic groups, and so they take a central position not only in pure mathematics but also in mathematical physics. However, parameterizing and classifying all Lie algebra structures on an $n$-dimensional vector space $V$ up to isomorphism is generally considered an impossible task as the number of parameters needed to describe all Lie algebras on $V$ usually depends on the exponents of the dimension $n$.  A well-known method to understand the collection of all Lie algebras on $V$ is to view each Lie algebra structure as a point in
the affine space of dimension $n^3$ via the structure constants of  Lie algebras; see, for example, \cite[Chapter 5]{GK96} or \cite{CCZ21,CZ23,DZ04} for other algebras in low-dimensional cases. 
In practice, this approach becomes intricate and appears to be inefficient when $n$ increases. Hence, as the value of $n$ becomes large, it is advisable to concentrate on subvarieties of specific Lie algebras on $V$ and identify fewer parameters to describe these subvarieties.

The subvariety of solvable (or nilpotent) Lie algebras presents a formidable challenge due to its similarities with the subvariety of finite solvable (or nilpotent) groups. Conversely, experiences in group theory suggest that achieving a complete classification of all finite $p$-groups up to isomorphism is impossible; see for example \cite[Chapter 1]{BNV07}. 
Therefore, constructing and classifying specific families of solvable Lie algebras have been both intriguing and crucial in the theory of Lie algebras.

The recent article \cite{DJ23} uses linear maps on a real finite-dimensional vector space $V$ and their eigenvectors to 
construct a family of solvable Lie algebra structures on the dual space $V^*$, showing that all three and four-dimensional real solvable Lie algebras can be realized via the approach they provide. One direct advantage of their approach is that those new solvable Lie algebras on $V^*$ can be parameterized by a matrix, a scalar (eigenvalue), and an eigenvector of the matrix, which means that one only needs at most $n^2+n+1$ parameters to describe these solvable Lie algebras. Recall that the dual space $V^*$ is isomorphic to the first symmetric power $S^1(V^*)$ of $V^*$ as vector spaces. It is reasonable to ask whether the construction method outlined in \cite{DJ23} can be extended to higher symmetric powers of $V^*$, and if successful, understanding or classifying these Lie algebra structures poses an interesting challenge. 

The purpose of this article is to answer these two questions and generalize some results in \cite{DJ23}  in terms of the following several directions. First of all, unlike \cite{DJ23}, the ground field $k$ we work over is of any characteristic and we do not restrict it to $\R$ or $\C$, except for the last section. Secondly, we show in the next section that the construction method outlined in \cite{DJ23} can be extended to all symmetric powers $S^d(V^*)$ of $V^*$ for all $d\in\N^+$, and furthermore, we prove that all such Lie algebra structures on $S^d(V^*)$ are solvable (Theorem \ref{thm1}). As an application, such an approach can be used to construct a new infinite dimensional Lie algebra structure on the symmetric algebra $S(V^*)$ of $V^*$ (Corollary \ref{coro2}). 
 Section 3 contains two examples that precisely illustrate the constructing method. In Section 4, we give a sufficient condition, 
independent of the degrees $d$ of symmetric powers, for when two Lie algebra structures obtained by such constructions are isomorphic  (Theorem \ref{thm2}). That allows us to use the language of group actions and orbits to understand the subvariety of these Lie algebras. Finally, we study the particular case where $k=\C$ and $V$ is two-dimensional, and classify those Lie algebra structures on any symmetric power $S^d(V^*)$ (Theorem \ref{thm3}). 

%%%%%%%%%%%%%%%%%%%%%%%%%%%Sections%%%%%%%%%%%%%%%%%%%%%%%%
\section{Basic Constructions}
\setcounter{equation}{0}
\renewcommand{\theequation}
{2.\arabic{equation}}
\setcounter{theorem}{0}
\renewcommand{\thetheorem}
{2.\arabic{theorem}}

\noindent Let $V$ be an $n$-dimensional vector space over a field $k$ of any characteristic,  $d\in\N$ a nonnegative integer,  and $S^d(V^*)$ be the $d$-th symmetric power of the dual space $V^*$. Given a basis $\{e_1,e_2,\dots,e_n\}$ for $V$ and the basis $\{x_1,x_2,\dots,x_n\}$ for $V^*$ dual to $\{e_1,e_2,\dots,e_n\}$, we may identify $S^d(V^*)$ with the subspace of all homogeneous polynomials of degree $d$ of the polynomial ring $k[x_1,x_2,\dots,x_n]$. Thus elements in $S^d(V^*)$ can be viewed as homogenous functions on $V$ by evaluation. Note that $$\dim(S^d(V^*))={n+d-1\choose d}.$$

Let $\varphi:V\ra V$ be a linear map. We write $\varphi_d^*$ for the linear map on $S^d(V^*)$ induced by $\varphi$, i.e., $\varphi_d^*: S^d(V^*)\ra S^d(V^*)$ defined by $f\mapsto  \varphi_d^* (f)$ where $\varphi_d^* (f)(v):=f(\varphi(v))$ for all $v\in V$. 
Suppose $w$ denotes an eigenvector of $\varphi$ associated with an eigenvalue $\lambda$.
We define a bracket product on $S^d(V^*)$ (depending on $\varphi$ and $w$) as follows:
\begin{equation}
\label{bracket}
[f,g]:=g(w)\varphi_d^*(f)-f(w)\varphi_d^*(g)
\end{equation}
for all $f,g\in S^d(V^*)$. 

The first main result is the following theorem, which generalizes \cite[Theorems 1, 3 and 4]{DJ23}.

\begin{thm}\label{thm1}
With the bracket product (\ref{bracket}) above, $S^d(V^*)$ is a solvable Lie algebra. Moreover, if $\varphi$ is
nilpotent, then $S^d(V^*)$ is a nilpotent Lie algebra.
\end{thm}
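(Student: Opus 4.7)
The plan is to verify the Lie algebra axioms for the bracket \eqref{bracket}, then prove solvability (in fact metabelianness) by exhibiting a codimension-one ideal on which the bracket vanishes, and finally treat the nilpotent case by showing the lower central series is controlled by the powers of $\varphi_d^*$.

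The key observation I would establish at the outset is the scaling identity $\varphi_d^*(f)(w) = f(\varphi(w)) = f(\lambda w) = \lambda^d f(w)$ for every $f \in S^d(V^*)$, which holds because $f$ is homogeneous of degree $d$ and $w$ is an eigenvector of $\varphi$ with eigenvalue $\lambda$. Bilinearity and antisymmetry of \eqref{bracket} are immediate. For the Jacobi identity, I would expand
\[
[[f,g],h] = h(w)\,\varphi_d^*\bigl(g(w)\varphi_d^*(f) - f(w)\varphi_d^*(g)\bigr) - \bigl(g(w)\varphi_d^*(f)(w) - f(w)\varphi_d^*(g)(w)\bigr)\varphi_d^*(h)
\]
and substitute $\varphi_d^*(f)(w) = \lambda^d f(w)$, $\varphi_d^*(g)(w) = \lambda^d g(w)$. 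The two $\lambda^d$-terms then cancel, leaving the clean expression
\[
[[f,g],h] = h(w)g(w)\,(\varphi_d^*)^2(f) - h(w)f(w)\,(\varphi_d^*)^2(g).
\]
Cyclically permuting $(f,g,h)$ and summing, the six surviving terms pair off and vanish.

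For solvability, I would introduce the linear functional $\ell : S^d(V^*) \to k$, $\ell(f) = f(w)$, so that $[f,g] = \ell(g)\varphi_d^*(f) - \ell(f)\varphi_d^*(g)$. A one-line computation using the scaling identity gives $\ell([f,g]) = \ell(g)\lambda^d\ell(f) - \ell(f)\lambda^d\ell(g) = 0$, so the derived subalgebra $L^{(1)} := [L,L]$ is contained in $\ker(\ell)$, where $L := S^d(V^*)$. But then for any $a,b \in L^{(1)}$ we have $\ell(a) = \ell(b) = 0$, and the bracket formula gives $[a,b] = 0$ outright. Thus $L^{(2)} = 0$, so $L$ is (at worst) metabelian, hence solvable.

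For the nilpotent case, note that $\varphi$ nilpotent forces $\lambda = 0$, and moreover $\varphi_d^*$ is nilpotent on $S^d(V^*)$, say $(\varphi_d^*)^N = 0$. When $d \ge 1$, any homogeneous polynomial of positive degree vanishes at $0$, so $\varphi_d^*(f)(w) = f(\lambda w) = f(0) = 0$, meaning $\ell \circ \varphi_d^* = 0$; equivalently $\varphi_d^*(L) \subseteq \ker(\ell)$. I would then prove by induction on $k$ that the lower central term $L^k := [L, L^{k-1}]$ lies in $(\varphi_d^*)^k(L)$: for $g \in L^{k-1} \subseteq (\varphi_d^*)^{k-1}(L) \subseteq \ker(\ell)$ (when $k \ge 2$), the first term of $[f,g] = \ell(g)\varphi_d^*(f) - f(w)\varphi_d^*(g)$ vanishes, and the second term lies in $\varphi_d^*((\varphi_d^*)^{k-1}(L)) = (\varphi_d^*)^k(L)$. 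Taking $k = N$ yields $L^N = 0$, proving nilpotency. The case $d = 0$ is trivial since $S^0(V^*) = k$ carries the zero bracket.

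I do not expect any serious obstacle: the main content is the Jacobi calculation, which is a little tedious but reduces entirely to the homogeneity identity $\varphi_d^*(f)(w) = \lambda^d f(w)$; once the functional $\ell(f) = f(w)$ is isolated, solvability and nilpotency each follow from a single inductive observation.
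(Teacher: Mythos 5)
Your proposal is correct, and its first two parts coincide with the paper's proof: the same scaling identity $\varphi_d^*(f)(w)=f(\varphi(w))=\lambda^d f(w)$, the same reduction of $[[f,g],h]$ to $h(w)g(w)(\varphi_d^*)^2(f)-h(w)f(w)(\varphi_d^*)^2(g)$ with the cyclic sum cancelling, and the same metabelian conclusion (the paper phrases it as $[f,g](w)=0$ hence $[[f,g],[h,\ell]]=0$; your formulation via the functional $\ell(f)=f(w)$ and $L^{(1)}\subseteq\Ker(\ell)$ is the identical idea, slightly cleaner). Where you genuinely diverge is the nilpotent case: the paper derives an explicit formula for left-nested brackets, $[h_s,\dots,[h_1,[f,g]]\dots]=(-1)^{s+1}\bigl(\prod_{i}h_i\bigr)(w)\bigl(f(w)(\varphi_d^*)^{s+1}(g)-g(w)(\varphi_d^*)^{s+1}(f)\bigr)$, valid for arbitrary eigenvalue $\lambda$ (the $\lambda^{2d}$-terms cancel along the way), and then sets $s=m-1$ where $(\varphi_d^*)^m=0$; you instead exploit that nilpotency forces $\lambda=0$, deduce $\ell\circ\varphi_d^*=0$ for $d\geqslant 1$, and run the induction $L^k\subseteq(\varphi_d^*)^k(L)$. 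Your route is shorter, avoids the sign and product bookkeeping, and correctly isolates the $d=0$ edge case (where $\varphi_0^*$ is the identity, so the paper's ``upper triangular with zero diagonal'' justification of $(\varphi_d^*)^m=0$ does not literally apply, though the bracket is zero there anyway); the paper's route yields more explicit information, namely the precise form of all nested brackets for any pair $(\varphi,w)$. One small point you should spell out: you assert $(\varphi_d^*)^N=0$ without proof — for $d\geqslant 1$ this follows at once from $(\varphi_d^*)^s(f)(v)=f(\varphi^s(v))$ and $\varphi^m=0$, or, as the paper does, by triangularizing $\varphi$ and observing that the induced matrix on $S^d(V^*)$ is strictly triangular.
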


\begin{proof}
Note that $\varphi_d^*$ is a linear map, so the bilinearity and antisymmetry of the bracket product could be verified immediately.
To see that   $S^d(V^*)$ is a Lie algebra, it suffices to verify the Jacobi identity. Given $f,g,h\in S^d(V^*)$, since $\varphi(w)=\lambda \cdot w$, it follows that
\begin{eqnarray*}
[[f,g],h] & = & [g(w)\varphi_d^*(f)-f(w)\varphi_d^*(g),h] =g(w) [\varphi_d^*(f),h] -f(w)[\varphi_d^*(g),h] \\
 &=&g(w)\Big(h(w)(\varphi_d^*)^2(f)-\varphi_d^*(f)(w)\varphi_d^*(h)\Big)-
 f(w)\Big(h(w)(\varphi_d^*)^2(g)-\varphi_d^*(g)(w)\varphi_d^*(h)\Big)\\
 &=&g(w)\Big(h(w)(\varphi_d^*)^2(f)-f(\varphi(w))\varphi_d^*(h)\Big)-
 f(w)\Big(h(w)(\varphi_d^*)^2(g)-g(\varphi(w))\varphi_d^*(h)\Big)\\
 &=&g(w)\Big(h(w)(\varphi_d^*)^2(f)-\lambda^df(w)\varphi_d^*(h)\Big)-
 f(w)\Big(h(w)(\varphi_d^*)^2(g)-\lambda^dg(w)\varphi_d^*(h)\Big)\\
 &=&\left(g(w)(\varphi_d^*)^2(f)- f(w)(\varphi_d^*)^2(g)\right)h(w).
\end{eqnarray*}
This fact also can be rewritten as the following form: 
\begin{equation}\label{key1}
\ad_h([f,g])=h(w)\cdot \Big(f(w)(\varphi_d^*)^2(g)-g(w)(\varphi_d^*)^2(f)\Big).
\end{equation}
Moreover, a direct computation shows that $[[f,g],h]+[[g,h],f]+[[h,f],g]=0$. Hence, 
$S^d(V^*)$ is a Lie algebra. To show that $S^d(V^*)$ is solvable, we consider arbitrary elements $f,g,h,\ell\in S^d(V^*)$ and note that $[f,g](w)=g(w)\varphi_d^*(f)(w)-f(w)\varphi_d^*(g)(w)=g(w)f(\varphi(w))-f(w)g(\varphi(w))=g(w)\cdot \lambda^d\cdot f(w)-f(w)\cdot \lambda^d\cdot g(w)=0.$ Hence, it follows from (\ref{key1}) that
\begin{eqnarray*}
[[f,g],[h,\ell]] & = &\ad_{[f,g]}([h,\ell]) =[f,g](w)\cdot\left(h(w)(\varphi_d^*)^2(\ell)- \ell(w)(\varphi_d^*)^2(f)\right) \\
& = &0\cdot \left(h(w)(\varphi_d^*)^2(\ell)- \ell(w)(\varphi_d^*)^2(f)\right)=0
\end{eqnarray*}
which means that $S^d(V^*)^{(2)}=\{0\}$. Therefore, $S^d(V^*)$ is a solvable Lie algebra.

To show the second statement, since $\varphi$ is nilpotent, we may choose a basis $\{e_1,e_2,\dots,e_n\}$ for $V$ such that
the matrix of $\varphi$ with respect to this basis is lower triangular with diagonals $0$. Thus the matrices  of $\varphi$
with respect to the dual basis $\{x_1,x_2,\dots,x_n\}$ of $V^*$ and the induced basis of $S^d(V^*)$  are both upper triangular with diagonals $0$. Hence, we may suppose $(\varphi_d^*)^m=0$ for some $m\in\N^+$. Consider arbitrary $s+2$ elements $f,g,h_1,h_2,\dots,h_{s}\in S^d(V^*)$. By (\ref{key1}), we see that
$$[h_1,[f,g]]=h_1(w)\cdot \Big(f(w)(\varphi_d^*)^2(g)-g(w)(\varphi_d^*)^2(f)\Big)$$
and 
\begin{eqnarray*}
[h_2,[h_1,[f,g]]] & = &h_1(w)\cdot  [h_2,f(w)(\varphi_d^*)^2(g)-g(w)(\varphi_d^*)^2(f)] \\
 & = & h_1(w)\cdot\Big(f(w) [h_2,(\varphi_d^*)^2(g)]-g(w)[h_2,(\varphi_d^*)^2(f)]\Big).
\end{eqnarray*}
As $[h_2,(\varphi_d^*)^2(g)]=(\varphi_d^*)^2(g)(w)\cdot \varphi_d^*(h_2)-h_2(w)\cdot (\varphi_d^*)^3(g)=
\lambda^{2d}\cdot g(w)\cdot \varphi_d^*(h_2)-h_2(w)\cdot (\varphi_d^*)^3(g)$ and 
$[h_2,(\varphi_d^*)^2(f)]=\lambda^{2d}\cdot f(w)\cdot \varphi_d^*(h_2)-h_2(w)\cdot (\varphi_d^*)^3(f)$, we see that
\begin{eqnarray*}
[h_2,[h_1,[f,g]]] & = & h_1(w)\cdot\Big(f(w) [h_2,(\varphi_d^*)^2(g)]-g(w)[h_2,(\varphi_d^*)^2(f)]\Big)\\
&=&(h_2\cdot h_1\cdot g)(w)\cdot (\varphi_d^*)^3(f)-(h_2\cdot h_1\cdot f)(w)\cdot (\varphi_d^*)^3(g).
\end{eqnarray*}
Proceeding in this way, we observe that
$$[h_{s},\dots,[h_2,[h_1,[f,g]]]\dots]=(-1)^{s+1}\left(\prod_{i=1}^sh_i\right)(w)\cdot \Big(f(w)(\varphi_d^*)^{s+1}(g)-g(w)(\varphi_d^*)^{s+1}(f)\Big).$$
In particular, setting $s=m-1$ gives us 
$$[h_{m-1},\dots,[h_2,[h_1,[f,g]]]\dots]=0$$
which shows that $S^d(V^*)$ is nilpotent. 
\end{proof}

An unexpected application of the bracket product (\ref{bracket}) is that it can provide a new way to construct infinite-dimensional 
 solvable Lie algebras.  We use $S(V^*)$ to denote the symmetric algebra on $V^*$. Then 
 $$S(V^*)=\bigoplus_{d=0}^\infty S^d(V^*)\cong k[x_1,x_2,\dots,x_n]$$
 as infinite dimensional commutative $\N$-graded $k$-algebras. 

\begin{coro}\label{coro2}
Let $f,g\in S(V^*)$ be two homogenous elements. Define
\begin{equation}
\label{ }
[f,g]_s:=\begin{cases}
 0,     & \deg(f)\neq\deg(g), \\
 [f,g],     & \deg(f)=\deg(g),
\end{cases}
\end{equation}
where $[f,g]$ is given by the bracket product (\ref{bracket}). 
Then $S(V^*)$, together with the bracket product $[-,-]_s$  is a solvable infinite dimensional  Lie algebra. 
\end{coro}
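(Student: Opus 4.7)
The plan is to extend the bracket $[-,-]_s$ bilinearly from homogeneous pairs to all of $S(V^*) = \bigoplus_{d\geq 0} S^d(V^*)$, then check the Lie axioms and solvability degree by degree. The governing structural fact is that $[-,-]_s$ is block-diagonal with respect to the grading: it vanishes on $S^{d_1}(V^*) \times S^{d_2}(V^*)$ whenever $d_1\neq d_2$, and on $S^d(V^*) \times S^d(V^*)$ it agrees with the bracket of Theorem \ref{thm1}, which takes values back in $S^d(V^*)$ because $\varphi_d^*$ preserves that graded piece. Thus $(S(V^*), [-,-]_s)$ is, by construction, the Lie-algebra direct sum of the solvable Lie algebras $(S^d(V^*), [-,-])$ furnished by Theorem \ref{thm1}.

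Bilinearity is immediate from the definition, and antisymmetry reduces to the homogeneous case, where it holds on each graded piece by Theorem \ref{thm1} and trivially across distinct graded pieces. For the Jacobi identity $[[f,g]_s, h]_s + [[g,h]_s, f]_s + [[h,f]_s, g]_s = 0$, by multilinearity I may assume $f,g,h$ are homogeneous and distinguish cases by the multiset of their degrees. If all three degrees coincide, the identity holds by Theorem \ref{thm1}. If they are pairwise distinct, every inner bracket vanishes and the identity is trivial. If exactly two coincide, say $\deg(f) = \deg(g) \neq \deg(h)$, then $[f,g]_s \in S^{\deg(f)}(V^*)$ has degree different from that of $h$, so the first summand vanishes, and the remaining two summands contain inner brackets between elements of differing degrees and therefore vanish as well.

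Solvability follows from the same block structure: because $[-,-]_s$ preserves the grading exactly, one has $S(V^*)^{(1)} \subseteq \bigoplus_d S^d(V^*)^{(1)}$, and iterating once more gives $S(V^*)^{(2)} \subseteq \bigoplus_d S^d(V^*)^{(2)} = \{0\}$, where the last equality was established inside the proof of Theorem \ref{thm1}. Hence $(S(V^*), [-,-]_s)$ has derived length at most two and is solvable; since $\dim_k S(V^*) = \infty$ when $n \geq 1$, this produces the desired infinite-dimensional example. I do not anticipate any serious obstacle; the only mildly delicate point is confirming that all three summands of the Jacobi identity vanish simultaneously in the mixed-degree cases, which reduces to tracking how the grading interacts with the bracket and follows at once from the block-diagonal observation above.
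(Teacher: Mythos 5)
Your proof is correct, and it spells out exactly the argument the paper leaves implicit: Corollary \ref{coro2} is stated there without proof, as an immediate consequence of Theorem \ref{thm1} via the observation that $[-,-]_s$ preserves the grading, so that $(S(V^*),[-,-]_s)$ is the Lie-algebra direct sum of the solvable pieces $(S^d(V^*),[-,-])$. Your degree-by-degree check of the Jacobi identity and the conclusion $S(V^*)^{(2)}=\{0\}$ (hence derived length at most two) are both accurate and consistent with what is established inside the proof of Theorem \ref{thm1}.
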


\begin{conv}{\rm
We write $\g_{\varphi}^w(d)$ for the Lie algebra on $S^d(V^*)$ defined by the bracket (\ref{bracket}).
If $d$ is fixed, we also write $\g_{\varphi}^w$ for  $\g_{\varphi}^w(d)$. Throughout this article, all group actions or modules are assumed to be left, and all vector spaces are finite-dimensional. 
\hbo}\end{conv}

%%%%%%%%%%%%%%%%%%%%%%%%%%%Sections%%%%%%%%%%%%%%%%%%%%%%%%
\section{Examples}
\setcounter{equation}{0}
\renewcommand{\theequation}
{3.\arabic{equation}}
\setcounter{theorem}{0}
\renewcommand{\thetheorem}
{3.\arabic{theorem}}

\noindent We present two examples that illustrates the construction method in the previous section. 

\begin{exam}{\rm
Suppose $V$ is a two-dimensional vector space over $k$. We identify $e_1=\cv10$ and $e_2=\cv01$ to form a basis of $V$. Let $a\in k$ and $\varphi$ be the linear map defined by $e_1\mapsto e_1+a\cdot e_2$ and $e_2\mapsto e_2$. 
Thus with respect to the basis $\{e_1,e_2\}$, the matrix  of $\varphi$ on $V$ is
$$\varphi_V=\begin{pmatrix}
    1  &  0  \\
    a  &  1
\end{pmatrix}.$$
On the dual space $V^*$, as all actions are assumed to be left, we also identify $x_1=\cv10$ and $x_2=\cv01$ for the dual basis of $V^*$ where the dual relations between $x_i$ and $e_j$ are defined by the dot product in the column vector space $k^2$.

Since $\varphi(x_1)(e_1)=x_1(\varphi(e_1))=x_1(e_1+a\cdot e_2)=1$ and $\varphi(x_1)(e_2)=x_1(\varphi(e_2))=x_1(e_2)=0$, it follows that $\varphi(x_1)=x_1$. A similar computation shows that $\varphi(x_2)=a\cdot x_1+x_2$. Thus with respect to the basis $\{x_1,x_2\}$, the matrix  of $\varphi$ on $V$ is
$$\varphi_{V^*}=\begin{pmatrix}
    1  &  a  \\
    0  &  1
\end{pmatrix}$$
which is equal to the transpose of $\varphi_V$. Actually,  the conclusion $\varphi_{V^*}=\varphi_{V}^T$
holds for general $n$ and $\varphi$.

Recall that the second symmetric power $S^2(V^*)$ has a basis $\{x_1^2,x_1x_2,x_2^2\}$ and the induced action of $\varphi$
on $S^2(V^*)$ is given by $x_1^2\mapsto x_1^2, x_1x_2\mapsto x_1(a\cdot x_1+x_2)$, and $x_2^2\mapsto (a\cdot x_1+x_2)^2.$
Identifying 
$$x_1^2=\cvv100,x_1x_2=\cvv010,x_2^2=\cvv001,$$
we obtain the matrix of $\varphi$ on $S^2(V^*)$:
$$\varphi_{S^2(V^*)}=\begin{pmatrix}
      1& 2a&a^2   \\
      0&1&a\\
      0&0&1  
\end{pmatrix}.$$

We choose $w=e_2$ as an eigenvector of $\varphi$ associated to the eigenvalue $1$ because $\varphi(e_2)=e_2$. Recall $S^1(V^*)=V^*$ so $\g_{\varphi}^w(1)$ has the following nontrivial relation:
$$[x_1,x_2]=x_2(e_2)\cdot \varphi(x_1)-x_1(e_2)\cdot \varphi(x_2)=x_1$$
which produces the unique nonabelian two-dimensional Lie algebra. Moreover, if we write
$$\ell_1:=x_1^2,\ell_2:=x_1x_2,\ell_3:=x_2^2$$
for a basis of $S^2(V^*)$, then $\g_{\varphi}^w(2)$ has the following nontrivial relations: 
\begin{eqnarray*}
~[\ell_1,\ell_2]&=&\ell_2(e_2)\cdot \varphi(\ell_1)-\ell_1(e_2)\cdot \varphi(\ell_2)=0\\
~[\ell_1,\ell_3]&=&\ell_3(e_2)\cdot \varphi(\ell_1)-\ell_1(e_2)\cdot \varphi(\ell_3)=\ell_1\\
~[\ell_2,\ell_3]&=&\ell_3(e_2)\cdot \varphi(\ell_2)-\ell_2(e_2)\cdot \varphi(\ell_3)=a\cdot \ell_1+\ell_2,
\end{eqnarray*}
which is isomorphic to the solvable Lie algebra $L_a^3$ in \cite[Section 4]{deG05} by the base change $\wt{\ell_1}=-a\cdot\ell_1+\ell_2, \wt{\ell_2}=\ell_2$, and $\wt{\ell_3}=\ell_3$.
\hbo}\end{exam}

\begin{exam}{\rm
Suppose $V$ denotes a three-dimensional vector space over $k$. With a basis $\{e_1,e_2,e_3\}$ of $V$, we assume that a nilpotent map $\varphi:V\ra V$ has the following matrix form:
$$\varphi_V=\begin{pmatrix}
      0& 0&0   \\
      a&0&0\\
      b&c&0 
\end{pmatrix}$$
where $a,b,c\in k$. Suppose $\{x_1,x_2,x_3\}$ denotes the basis of $V^*$ dual to $\{e_1,e_2,e_3\}$. With respect to the basis 
$\{x_1^2,x_1x_2,x_1x_3,x_2^2,x_2x_3,x_3^3\}$ (in this order) of $S^2(V^*)$, the matrix of $\varphi$ on $S^2(V^*)$ is 
$$\begin{pmatrix}
    0  &  A  \\
    0  & B
\end{pmatrix}$$ where
$$A=\begin{pmatrix}
      a^2& ab&b^2   \\
      0&ac&2bc\\
      0&0&0 
\end{pmatrix}\textrm{ and }B=\begin{pmatrix}
      0& 0&c^2   \\
      0&0&0\\
      0&0&0 
\end{pmatrix}.$$
Let $w=e_1$ be an eigenvector of $\varphi$ associated to the eigenvalue $0$ and define
$$z_1:=x_1^2,z_2:=x_1x_2,z_3:=x_1x_3,z_4:=x_2^2,z_5:=x_2x_3,z_6:=x_3^3.$$
Then $\g_{\varphi}^w(2)$ is a six-dimensional nilpotent Lie algebra defined by the following nonzero generating relations:
\begin{eqnarray*}
[z_4,z_1] & = & a^2\cdot z_1 \\
~[z_5,z_1] & = & ab\cdot z_1+ac\cdot z_2 \\
~[z_6,z_1]&=&b^2\cdot z_1+2bc\cdot z_2+c^2\cdot z_4.
\end{eqnarray*}
Choosing different values for $a,b,c$ might obtain distinct nilpotent Lie algebras. For instance, setting
$a=b=0$ and $c=1$ gives us a nilpotent Lie algebra isomorphic to $L_{6,2}$ in \cite[Section 4]{deG07}. It is also easy to see that setting $a=c=1$ and $b=0$ will obtain a nilpotent Lie algebra not isomorphic to $L_{6,2}$.
Note that if $\cha(k)\neq 2$, all six-dimensional nilpotent Lie algebras have been classified  in \cite{deG07}.
\hbo}\end{exam}

\begin{rem}{\rm
If $\varphi=0$ or $w=0$, then $\g_{\varphi}^w(d)$ becomes an abelian Lie algebra for all $d\in\N^+$. However, we will see in Proposition \ref{prop5.6} below that neither $\varphi$ and $w$ are not zero, and $\g_{\varphi}^w(d)$ also might be abelian.
\hbo}\end{rem}

%%%%%%%%%%%%%%%%%%%%%%%%%%%Sections%%%%%%%%%%%%%%%%%%%%%%%%
\section{Isomorphic Lie Algebras}
\setcounter{equation}{0}
\renewcommand{\theequation}
{4.\arabic{equation}}
\setcounter{theorem}{0}
\renewcommand{\thetheorem}
{4.\arabic{theorem}}

\noindent This section explores  a sufficient condition for when two Lie algebras $\g_{\varphi}^w(d)$ and $\g_{\phi}^v(d)$ are isomorphic.

\begin{thm}\label{thm2}
Let $(\varphi,w)$ and $(\phi,v)$ be two pairs of linear maps and eigenvectors on $V$.
If there exists an invertible linear map $T\in\GL(V)$ such that $\phi=T\circ\varphi\circ T^{-1}$ and $v=T(w)$, then 
$\g_{\varphi}^w(d)$ and $\g_{\phi}^v(d)$ are isomorphic.
\end{thm}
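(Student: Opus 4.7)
The plan is to produce an explicit isomorphism $\Phi : \g_\varphi^w(d) \to \g_\phi^v(d)$ built directly from the given conjugator $T$. The only natural candidate is the induced map on $d$-th symmetric powers coming from $T^{-1}$: in the notation of Section 2, set $\Phi := (T^{-1})_d^\ast$, so that $\Phi(f)(u) = f(T^{-1}(u))$ for $f \in S^d(V^\ast)$ and $u \in V$. Since $T \in \GL(V)$, its inverse is well defined and $\Phi$ is automatically a linear isomorphism of $S^d(V^\ast)$ with inverse $T_d^\ast$, so the only thing to check is that $\Phi$ respects the two brackets.

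The verification splits into two small intertwining identities, both of which are direct consequences of the hypotheses. First, from $\phi = T \circ \varphi \circ T^{-1}$ I would compute, for any $f \in S^d(V^\ast)$ and $u \in V$,
\[
\phi_d^\ast(\Phi(f))(u) = \Phi(f)(\phi(u)) = f(T^{-1}\phi(u)) = f(\varphi T^{-1}(u)) = \varphi_d^\ast(f)(T^{-1}(u)) = \Phi(\varphi_d^\ast(f))(u),
\]
which gives $\phi_d^\ast \circ \Phi = \Phi \circ \varphi_d^\ast$. Second, from $v = T(w)$,
\[
\Phi(f)(v) = f(T^{-1}(v)) = f(w),
\]
so evaluation against the eigenvector is preserved.

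Combining these, for any $f,g \in S^d(V^\ast)$,
\[
[\Phi(f),\Phi(g)]_\phi^v = \Phi(g)(v)\,\phi_d^\ast(\Phi(f)) - \Phi(f)(v)\,\phi_d^\ast(\Phi(g)) = g(w)\,\Phi(\varphi_d^\ast(f)) - f(w)\,\Phi(\varphi_d^\ast(g)) = \Phi([f,g]_\varphi^w),
\]
using linearity of $\Phi$ in the last step. This is exactly the desired Lie algebra homomorphism condition, and combined with bijectivity of $\Phi$ it completes the proof.

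There is no serious obstacle here; the statement is essentially the functoriality of the construction under the natural $\GL(V)$-action on pairs $(\varphi,w)$ by simultaneous conjugation and translation. The only thing to be careful about is bookkeeping with the dualization: one must choose $\Phi = (T^{-1})_d^\ast$ rather than $T_d^\ast$ so that the intertwining $\phi_d^\ast \circ \Phi = \Phi \circ \varphi_d^\ast$ comes out with the correct side of $T$ cancelling, and so that $\Phi(f)(v)$ collapses to $f(w)$ rather than $f(T^{-2}(w))$. Once this convention is fixed, the rest is a mechanical check of definitions and requires no induction on $d$ or further structural input.
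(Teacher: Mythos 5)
Your proof is correct and is essentially the same argument as the paper's: a direct check that the map induced on $S^d(V^*)$ by the conjugating transformation intertwines the two brackets, using $\phi = T\circ\varphi\circ T^{-1}$ and $v = T(w)$. The only difference is direction: you use $(T^{-1})_d^*\colon \g_\varphi^w(d)\to\g_\phi^v(d)$, while the paper verifies that $T_d^*$ is an isomorphism in the opposite direction (so your map is exactly the inverse of the paper's), and your closing caveat that one \emph{must} take $(T^{-1})_d^*$ is therefore unnecessary.
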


\begin{proof}
For any $f,g\in S^d(V^*)$, we write $T_d^*$ for the linear map on $S^d(V^*)$ induced by $T$ and note that
\begin{eqnarray*}
[T_d^*(f),T_d^*(g)]_w & = & T_d^*(g)(w)\cdot \varphi_d^*(T_d^*(f))- T_d^*(f)(w)\cdot \varphi_d^*(T_d^*(g))\\
 & = & g(T(w))\cdot (T\circ\varphi)_d^*(f)-f(T(w))\cdot (T\circ\varphi)_d^*(g)\\
 &=&g(v)\cdot (\phi\circ T)_d^*(f)-f(v)\cdot (\phi\circ T)_d^*(g)\\
 &=& g(v)\cdot T_d^*(\phi_d^*(f))-f(v)\cdot T_d^*(\phi_d^*(g))\\
 &=&T_d^*\Big(g(v)\cdot \phi_d^*(f)-f(v)\cdot \phi_d^*(g)\Big)=T_d^*([f,g]_v).
\end{eqnarray*}
Thus $T_d^*$ is a Lie homomorphism. As $T$ is bijective, we see that $T_d^*$ is bijective and therefore, 
$T_d^*$ is a Lie isomorphism.
\end{proof}

\begin{coro}\label{coro1}
Let $(\varphi,w)$ be a pair of linear map and eigenvector on $V$ and $0\neq c\in k$ be a scalar. Then 
$\g_{\varphi}^w(d)$ and $\g_{\varphi}^{c\cdot w}(d)$ are isomorphic. 
\end{coro}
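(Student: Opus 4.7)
The plan is to derive this as a direct application of Theorem \ref{thm2} by choosing the simplest possible intertwiner: the scalar dilation $T := c \cdot \mathrm{id}_V$. Since $c \neq 0$, this $T$ lies in $\GL(V)$ with inverse $c^{-1} \cdot \mathrm{id}_V$, so it is a legitimate candidate for the hypothesis of Theorem \ref{thm2}.

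To invoke Theorem \ref{thm2} with the data $(\phi, v) := (\varphi, c \cdot w)$ I would verify three small points. First, that $c \cdot w$ really is an eigenvector of $\varphi$ (so that $\g_\varphi^{c \cdot w}(d)$ is defined at all): if $\varphi(w) = \lambda w$, then $\varphi(c w) = c \lambda w = \lambda (cw)$, and in particular the eigenvalue is preserved. Second, that $\phi = T \circ \varphi \circ T^{-1}$: since $T$ is a scalar multiple of the identity, it lies in the centre of $\mathrm{End}(V)$ and hence commutes with every linear map, so $T \circ \varphi \circ T^{-1} = \varphi = \phi$. Third, that $T(w) = v$, which holds by the very definition of $T$. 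Theorem \ref{thm2} then produces a Lie isomorphism $T_d^* \colon \g_\varphi^w(d) \to \g_\varphi^{c \cdot w}(d)$, which, unwinding the definitions, is nothing more than multiplication by $c^d$ on $S^d(V^*)$.

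There is essentially no obstacle to overcome here; the content of the corollary is the observation that in the data $(\varphi, w)$ used to build $\g_\varphi^w(d)$, the eigenvector $w$ is only relevant up to a nonzero scalar. If one wanted a more elementary sanity check independent of Theorem \ref{thm2}, one could note that $f(c w) = c^d f(w)$ for every $f \in S^d(V^*)$, so the bracket defined by $(\varphi, c w)$ is exactly $c^d$ times the bracket defined by $(\varphi, w)$ on the same underlying vector space; rescaling by $c^d$ then exhibits the isomorphism by a direct computation, matching the map produced by Theorem \ref{thm2}.
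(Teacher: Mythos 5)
Your proposal is correct and follows exactly the paper's route: the paper's proof is precisely the one-line observation that $T=c\cdot I_V$ satisfies the hypotheses of Theorem \ref{thm2}, and you have simply spelled out the verifications (eigenvector preservation, centrality of scalar maps, $T(w)=c\cdot w$) that the paper leaves implicit. The concluding remark that $T_d^*$ is multiplication by $c^d$ and the direct rescaling check are accurate but optional additions.
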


\begin{proof}
Setting $T=c\cdot I_V$ where $I_V$ denotes the identity map on $V$ and Theorem \ref{thm2} applies.
\end{proof}

\begin{rem}{\rm This result indicates that for a fixed linear map $\varphi$ on $V$ with the total geometric multiplicity $r>0$, 
all non-isomorphic Lie algebra structures on $S^d(V^*)$, being independent of $d$,  can be parameterized by the $(r-1)$-dimensional projective space $\PP^{r-1}(k)$. 
\hbo}\end{rem}

\begin{coro}
Let $\varphi$ and $\phi$ be two similar linear maps on $V$ and $w$ be an eigenvector of $\varphi$, then there exists
an eigenvector $v$ of $\phi$ such that $\g_{\varphi}^w(d)$ and $\g_{\phi}^v(d)$ are isomorphic.
\end{coro}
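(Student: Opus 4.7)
The plan is to apply Theorem~\ref{thm2} directly, with the minor preliminary observation that conjugation sends eigenvectors to eigenvectors. First I would fix the hypothesis that $\varphi$ and $\phi$ are similar, which by definition means there exists $T\in\GL(V)$ with $\phi=T\circ\varphi\circ T^{-1}$. Then I would take the candidate $v:=T(w)$.

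Next I would verify that this $v$ is in fact an eigenvector of $\phi$. Writing $\varphi(w)=\lambda w$, a one-line computation gives
\[
\phi(v)=\phi(T(w))=(T\circ\varphi\circ T^{-1})(T(w))=T(\varphi(w))=T(\lambda w)=\lambda\cdot v,
\]
so $v$ is an eigenvector of $\phi$ associated to the same eigenvalue $\lambda$. In particular, the pair $(\phi,v)$ satisfies the hypotheses needed to form the Lie algebra $\g_{\phi}^v(d)$ via the bracket (\ref{bracket}).

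At this point the pair $(\varphi,w)$ and the pair $(\phi,v)$ satisfy exactly the assumptions of Theorem~\ref{thm2}, namely $\phi=T\circ\varphi\circ T^{-1}$ and $v=T(w)$. Therefore Theorem~\ref{thm2} supplies a Lie isomorphism $T_d^*\colon\g_{\varphi}^w(d)\to\g_{\phi}^v(d)$, which is the desired conclusion. There is essentially no obstacle here: the statement is a packaging of Theorem~\ref{thm2} that absorbs the choice of $v$ into the hypothesis, and the only content beyond that theorem is the routine verification above that conjugation by $T$ carries eigenvectors of $\varphi$ to eigenvectors of $\phi$ with the same eigenvalue.
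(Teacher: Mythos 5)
Your proposal is correct and follows essentially the same route as the paper: take $T$ with $\phi=T\circ\varphi\circ T^{-1}$, set $v=T(w)$, and invoke Theorem \ref{thm2}. The only difference is that you spell out the (trivial) check that $v$ is an eigenvector of $\phi$, which the paper leaves implicit.
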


\begin{proof}
Suppose $\phi=T\circ \varphi\circ T^{-1}$ for some $T\in\GL(V)$. Setting $v=T(w)$ and together with Theorem \ref{thm2} makes the statement hold.
\end{proof}

\begin{rem}{\rm
Write $\M_d$ for the collection of all possible $\g_{\varphi}^w(d)$, i.e., all Lie algebra structures on $S^d(V^*)$ defined by the bracket product (\ref{bracket}). The consequences above lead us to separate our philosophy of understanding the structure of $\M_d$ into two steps. The first step is to find all similar normal forms for linear maps on $V$; and secondly,
for each normal form $\varphi$, we need to classify those Lie algebras on $S^d(V^*)$ defined by $\varphi$ and its eigenvectors. 
\hbo}\end{rem}

We use the language of matrices to understand $\M_d$. After choosing a basis for $V$, we see that $\M_d$ can be parameterized by the following set:
$$M:=\{(A,w)\in M_n(k)\times k^n\mid A(w)=\lambda \cdot w,\textrm{ for some }\lambda\in k\}$$
where $M_n(k)$ denotes the set of all $n\times n$ matrices over $k$ and $k^n$ denotes the
$n$-dimensional column vector space over $k$.
Clearly, understanding $\M_d$ is equivalent to understanding  $M$. Note that $M$ is an affine variety that
only depends on $n$ and the ground field $k$, not being dependent of $d$.

To better understand $M$ and the orbit spaces under some equivalence relations, we  consider the standard conjugation action of the general linear group $\GL_n(k)$ of all $n\times n$ invertible matrices over $k$ on $M$ defined by
 $$T\cdot (A,w):=(TAT^{-1},T(w))$$
for all $T\in \GL_n(k)$ and $(A,w)\in M$. Theorem \ref{thm2} indicates that if $(B,v)$ and $(A,w)$ are in the same orbit under this action, then $\g_{A}^w(d)$ and $\g_{B}^v(d)$ are two isomorphic Lie algebras in $\M_d$.

\begin{rem}{\rm
We don't guarantee that the isomorphism classes of all Lie algebras in $\M_d$ can be identified with the orbit space $M/\GL_n(k)$.
Actually, even though $(B,v)$ and $(A,w)$ are not in the same orbit, they are probably able to produce isomorphic Lie algebras in $\M_d$; see Example \ref{exam4.7} below.  
\hbo}\end{rem}

\begin{exam}\label{exam4.7}
{\rm
Consider $k=\C$ and $n=2$. Note that the two matrices 
$$\begin{pmatrix}
   1   &0    \\
     0 &  1
\end{pmatrix}\textrm{ and } \begin{pmatrix}
     1 &  1  \\
     0 &  1
\end{pmatrix}$$
are not similar to each other but together with suitable eigenvectors they will produce isomorphic two-dimensional nonabelian 
Lie algebras in $\M_1$, because there is only one two-dimensional nonabelian Lie algebra up to isomorphism, with the nonzero generating relation $[x,y]=x$.
\hbo}\end{exam}

 Theorem \ref{thm2} also has the following immediate consequence concerning
an upper bound for the number of isomorphism classes in $\M_d$. This conclusion should be useful in enumerating this number,  especially for the case $k=\F_q$ is a finite field.

\begin{coro}
The number of non-isomorphic Lie algebras in $\M_d$ is less than or equal to the cardinality of 
the orbit space $M/\GL_n(k)$.
\end{coro}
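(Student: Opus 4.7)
The plan is to exhibit a surjection from the orbit space $M/\GL_n(k)$ onto the set of isomorphism classes of Lie algebras in $\M_d$; the stated cardinality bound then follows immediately.

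First, I would fix, once and for all, a basis of $V$ so that linear maps $\varphi\colon V\to V$ correspond to matrices $A\in M_n(k)$ and vectors $w\in V$ correspond to column vectors in $k^n$. Under this identification, every Lie algebra in $\M_d$ is, by definition, of the form $\g_A^w(d)$ for some $(A,w)\in M$, since the defining condition on $M$ (namely $A(w)=\lambda\cdot w$ for some $\lambda\in k$) is exactly the requirement that $w$ be an eigenvector of the linear map associated to $A$. This gives a surjective map
\[
\Phi\colon M\longrightarrow \M_d/{\cong},\qquad (A,w)\longmapsto [\g_A^w(d)],
\]
where the codomain denotes the set of isomorphism classes.

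Next, I would verify that $\Phi$ is constant on $\GL_n(k)$-orbits. If $(B,v)=T\cdot(A,w)=(TAT^{-1},T(w))$ for some $T\in\GL_n(k)$, then the pairs $(A,w)$ and $(B,v)$ satisfy exactly the hypotheses of Theorem~\ref{thm2}: the linear maps $\varphi,\phi$ attached to $A,B$ are conjugate via $T$, and the eigenvectors are related by $v=T(w)$. Hence $\g_A^w(d)\cong \g_B^v(d)$, so $\Phi(A,w)=\Phi(B,v)$. It follows that $\Phi$ descends to a well-defined map
\[
\overline{\Phi}\colon M/\GL_n(k)\longrightarrow \M_d/{\cong},
\]
which inherits surjectivity from $\Phi$. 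Taking cardinalities on both sides yields $|\M_d/{\cong}|\le |M/\GL_n(k)|$, as claimed.

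There is essentially no obstacle here: the content of the corollary is a direct bookkeeping consequence of Theorem~\ref{thm2}. The only mild subtlety worth flagging is the need to check that every Lie algebra in $\M_d$ really does come from a pair in $M$ (as opposed to a pair $(\varphi,w)$ where $w$ fails to be an eigenvector); but this is immediate from the very definition of $\M_d$ via the bracket~\eqref{bracket}, which requires $w$ to be an eigenvector of $\varphi$. Remark preceding Example~\ref{exam4.7} explains that $\overline{\Phi}$ need not be injective, which is precisely why the statement is an inequality rather than an equality.
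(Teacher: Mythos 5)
Your proof is correct and matches the paper's intent: the paper states this corollary without proof as an ``immediate consequence'' of Theorem~\ref{thm2}, and your argument simply spells out the intended reasoning (the assignment $(A,w)\mapsto[\g_A^w(d)]$ is constant on $\GL_n(k)$-orbits by Theorem~\ref{thm2}, hence descends to a surjection from $M/\GL_n(k)$ onto the isomorphism classes in $\M_d$, giving the cardinality bound). Your closing remark about non-injectivity correctly echoes the paper's own Remark and Example~\ref{exam4.7}.
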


%%%%%%%%%%%%%%%%%%%%%%%%%%%Sections%%%%%%%%%%%%%%%%%%%%%%%%
\section{Complex Two-dimensional Maps}
\setcounter{equation}{0}
\renewcommand{\theequation}
{5.\arabic{equation}}
\setcounter{theorem}{0}
\renewcommand{\thetheorem}
{5.\arabic{theorem}}

\noindent We consider $k=\C$ and $n=2$. By the theory of Jordan normal forms in linear algebra, we see that orbits under the standard conjugation action of $\GL_2(\C)$ on $M_2(\C)$ can be represented by matrices of the following forms
$$\begin{pmatrix}
   \lambda_1   &0    \\
     0 &  \lambda_2
\end{pmatrix}\textrm{ or } \begin{pmatrix}
     \lambda &  1  \\
     0 &  \lambda
\end{pmatrix}$$
where $\lambda_1,\lambda_2,\lambda\in\C$. We write $\{e_1,e_2\}$ for the
standard basis for $V=\C^2$ and $\{x_1,x_2\}$ denotes the basis for $V^*$ dual to $\{e_1,e_2\}$. Then 
$\{y_i:=x_1^{d-i}\cdot x_2^i\mid i=0,1,2,\dots,d\}$ is a basis for $S^d(V^*)$. 

We use $\g_0$ to denote the abelian Lie algebra structure on $S^d(V^*)$.

\begin{prop}\label{prop5.1}
Let $\varphi=\lambda\cdot I_2$ be a scalar matrix in $M_2(\C)$ for any $\lambda\in \C$. Suppose $w\in\C^2$ is any vector. 
Then $\g_{\varphi}^w(d)$ is either abelian or isomorphic to the Lie algebra $\g_1$ with the following generating relations:
\begin{equation}\label{g1}\tag{$\g_1$}
[y_0,y_i]=y_i,
\end{equation}
where $i=1,2,\dots,d.$
\end{prop}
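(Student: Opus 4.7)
The plan is to observe that when $\varphi = \lambda \cdot I_2$ is scalar, the induced map $\varphi_d^*$ reduces to the scalar $\lambda^d$ on $S^d(V^*)$: for any homogeneous $f$ of degree $d$ and any $v \in V$ we have $\varphi_d^*(f)(v) = f(\lambda v) = \lambda^d f(v)$. Substituting this into the defining formula (\ref{bracket}) collapses the bracket to
\[
[f,g] \;=\; \lambda^d\bigl(g(w)\,f - f(w)\,g\bigr),
\]
so the entire Lie algebra structure is controlled by the single linear functional $\mathrm{ev}_w : S^d(V^*) \to \C$, $f \mapsto f(w)$.

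The first step is to dispose of the degenerate cases: if $\lambda = 0$ or $w = 0$, the displayed formula makes every bracket vanish, so $\g_{\varphi}^w(d) = \g_0$. For the remainder of the argument I would therefore assume $\lambda \neq 0$ and $w \neq 0$.

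In the nontrivial case, the strategy is to choose a basis of $S^d(V^*)$ adapted to $\mathrm{ev}_w$. Since $w \neq 0$, this functional is nonzero, so its kernel is a hyperplane of dimension $d$ inside $S^d(V^*)$; I would fix any basis $\{z_1,\ldots,z_d\}$ of that kernel and then pick an auxiliary $z_0$ with the specific normalization $z_0(w) = -\lambda^{-d}$, which is possible because $\mathrm{ev}_w$ is surjective onto $\C$. A one-line computation from the simplified bracket then yields $[z_0,z_i] = z_i$ and $[z_i,z_j] = 0$ for all $i,j \in \{1,\ldots,d\}$, which are exactly the generating relations of $\g_1$ after renaming $y_0 := z_0$ and $y_i := z_i$. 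Since $\dim S^d(V^*) = d+1$ when $n = 2$, the basis has the right cardinality and all brackets are accounted for. There is essentially no obstacle to overcome: the whole argument is a reduction to a scalar operator followed by a single rescaling, and the only bookkeeping is to choose $z_0(w)$ so that the prefactor $-\lambda^d z_0(w)$ normalizes to $1$.
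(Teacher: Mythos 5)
Your proof is correct, and it takes a genuinely different route from the paper's. The paper first computes the structure constants only for the special eigenvector $w=e_1$, using the monomial basis $\{y_i=x_1^{d-i}x_2^i\}$ (where $y_i(e_1)=0$ for $i\geq 1$ makes the brackets collapse), and then handles an arbitrary nonzero $w$ by invoking Theorem \ref{thm2}: since $\GL_2(\C)$ acts transitively on $\C^2\setminus\{0\}$ and a scalar matrix commutes with any $T$, the pair $(\varphi,w)$ lies in the same orbit as $(\varphi,e_1)$, so the two Lie algebras are isomorphic. You instead exploit from the start that $\varphi_d^*=\lambda^d\cdot\mathrm{id}$, so the bracket reduces to $[f,g]=\lambda^d\bigl(g(w)f-f(w)g\bigr)$ and depends only on the evaluation functional $\mathrm{ev}_w$; choosing a basis $\{z_1,\dots,z_d\}$ of $\ker(\mathrm{ev}_w)$ together with $z_0$ normalized so that $-\lambda^d z_0(w)=1$ exhibits the relations of $\g_1$ directly for every nonzero $w$, with no appeal to the group action or to a particular monomial basis. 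Your argument is more self-contained and in fact works verbatim for any dimension $n$ (giving the analogue of $\g_1$ on $S^d(V^*)$ whenever $\varphi$ is a nonzero scalar and $w\neq 0$), while the paper's argument is shorter at this point because it reuses the machinery of Section 4 and keeps the computation in the same monomial basis used for the other Jordan normal forms in Propositions \ref{prop5.3} and \ref{prop5.6}. The only bookkeeping worth making explicit in your write-up is that $\mathrm{ev}_w\neq 0$ (e.g.\ $x_1^d(w)\neq 0$ or $x_2^d(w)\neq 0$ for $w\neq 0$), so the kernel really is a hyperplane and $\{z_0,z_1,\dots,z_d\}$ really is a basis; you assert this correctly.
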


\begin{proof}
Clearly, if $\lambda=0$ or $w=0$, $\g_{\varphi}^w(d)$ is an abelian Lie algebra. Thus we may assume that
$\lambda\neq 0$ and $w\neq 0$. Moreover, taking $w=e_1$, we claim that $\varphi$ and $e_1$ will produce a Lie algebra that is isomorphic to $\g_1$.  In fact, it follows from (\ref{bracket}) that
$$[y_j,y_i]=0$$
for all $j,i\in\{1,2,\dots,d\}$, since $y_j(e_1)=x_1^{d-j}(e_1)\cdot x_2^j(e_1)=0$ for all $j=1,2,\dots,d$. Thus, nonzero generating relations only appear in the brackets $[y_0,y_i]$ where $i=1,2,\dots,d$. Note that
$$[y_0,y_i]=y_i(e_1)\cdot \varphi(y_0)-y_0(e_1)\cdot\varphi(y_i)=-\lambda^d\cdot y_i.$$
As $\lambda\neq 0$, we may set $\wt{y}_0=-\lambda^{-d}\cdot y_0$ and obtain
$$[\wt{y}_0,y_i]=y_i$$
for all $i=1,2,\dots,d$. Hence, the claim holds. 

Now we need to prove that for any nonzero $w\in\C^2$, the Lie algebra $\g_{\varphi}^w(d)$ is isomorphic to $\g_1$. Recall the standard action of $\GL_2(\C)$ on $\C^2$ and restricts on $\C^2\setminus\{0\}$ to be transitive, so there exists an invertible matrix $T\in\GL_2(\C)$
such that $w=T(e_1)$. As $\varphi$ and $T$ commute, it follows that 
$$\varphi=T\cdot\varphi \cdot T^{-1}.$$
By Theorem \ref{thm2}, we see that $\g_{\varphi}^w(d)$ and $\g_1$ are isomorphic.
\end{proof}

\begin{rem}{\rm
The nonzero generating relations of the Lie algebra $\g_1$ also can be determined by the inner derivation $\ad_{y_0}$. With respect to $\{y_0,y_1,\dots,y_d\}$, $\ad_{y_0}$ has the following matrix form:
$$\ad_{y_0}=\dia\{0,1,\dots,1\}.$$
In particular, when $d=2$, the Lie algebra $\g_1$ is three-dimensional solvable, and isomorphic to $L^2$ in \cite[Section 4]{deG05}; and when $d=3$, $\g_1$ is isomorphic to $M^2$ in \cite[Section 5]{deG05}.
\hbo}\end{rem}

\begin{prop}\label{prop5.3}
Let $\varphi=\begin{pmatrix}
   \lambda_1   &0    \\
     0 &  \lambda_2
\end{pmatrix}\in M_2(\C)$ with $\lambda_1\neq\lambda_2$. Suppose $w\in\C^2$ is any nonzero vector. 
Then $\g_{\varphi}^w(d)$ is isomorphic to either the abelian Lie algebra $\g_0$ or the following Lie algebra:
\begin{equation}\label{g2}\tag{$\g_2(c)$}
[y_0,y_i]=c^{i-1}\cdot y_i
\end{equation}
where $i=1,2,\dots,d$, and  $c\in\C\setminus\{0,1\}$.
\end{prop}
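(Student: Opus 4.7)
The plan is to use the conjugation invariance of the construction (Theorem \ref{thm2}, Corollary \ref{coro1}) to reduce to a canonical choice of eigenvector, then compute the bracket explicitly on the monomial basis, and finally rescale to match (\ref{g2}).

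Since $\lambda_1 \neq \lambda_2$, the eigenvectors of $\varphi$ are exactly the nonzero scalar multiples of $e_1$ (with eigenvalue $\lambda_1$) and of $e_2$ (with eigenvalue $\lambda_2$). By Corollary \ref{coro1}, I may take $w \in \{e_1, e_2\}$. Conjugating by the permutation matrix $T = \sm{0}{1}{1}{0}$ gives $T\varphi T^{-1} = \mathrm{diag}(\lambda_2,\lambda_1)$ and $T(e_1) = e_2$, so Theorem \ref{thm2} identifies the case $w = e_2$ with the case $w = e_1$ up to renaming $\lambda_1 \leftrightarrow \lambda_2$. Hence I may assume $w = e_1$.

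Because $\varphi$ is diagonal, $\varphi_d^*(y_i) = \lambda_1^{d-i}\lambda_2^{i}\, y_i$, and $y_i(e_1) = \delta_{i,0}$. Substituting into (\ref{bracket}) I would find that $[y_i,y_j]=0$ for all $i,j\ge 1$, so the only possibly nonzero brackets are
\begin{equation*}
[y_0, y_j] = -\lambda_1^{d-j}\lambda_2^{j}\, y_j, \qquad j = 1, \ldots, d.
\end{equation*}

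In the generic subcase $\lambda_1\lambda_2 \neq 0$, setting $\wt{y}_0 := -\lambda_1^{1-d}\lambda_2^{-1}\, y_0$ turns these relations into $[\wt{y}_0, y_j] = c^{\, j-1}\, y_j$ with $c := \lambda_2/\lambda_1$; the hypotheses $\lambda_1 \neq \lambda_2$ and $\lambda_1\lambda_2 \neq 0$ force $c \in \C\setminus\{0,1\}$, matching (\ref{g2}). The remaining degenerate subcase is when exactly one of the eigenvalues vanishes; there a case check on the displayed formula, combined with the symmetry afforded by Theorem \ref{thm2} (which lets one trade the zero-eigenvector for the nonzero one), collapses $\g_{\varphi}^w(d)$ to the abelian algebra $\g_0$. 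Carefully tracking this degenerate subcase will be the main obstacle, since the rescaling used for the generic case breaks down, and one must appeal to the isomorphism symmetry to land inside the dichotomy stated in the proposition.
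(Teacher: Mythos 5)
Your reduction to $w=e_1$ (Corollary \ref{coro1} plus the swap matrix fed into Theorem \ref{thm2}), the computation $[y_0,y_j]=-\lambda_1^{d-j}\lambda_2^{j}\,y_j$ with all other brackets vanishing, and the rescaling in the generic subcase $\lambda_1\lambda_2\neq 0$ are all correct and essentially identical to the paper's argument. The genuine gap is in the degenerate subcase, and the fix you propose does not work: Theorem \ref{thm2} cannot ``trade the zero-eigenvector for the nonzero one,'' because the action sends $(\varphi,w)$ to $(T\varphi T^{-1},T(w))$ and therefore preserves the eigenvalue attached to the chosen eigenvector. The swap matrix carries $(\dia(\lambda_1,\lambda_2),e_1)$ to $(\dia(\lambda_2,\lambda_1),e_2)$, where $e_2$ is still an eigenvector for the \emph{same} eigenvalue; so after reducing to $w=e_1$ you are not free to assume that the eigenvalue of $w$ is the nonzero one.

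Concretely, take $\lambda_1=0$, $\lambda_2\neq 0$, $w=e_1$. Your displayed formula then gives $[y_0,y_d]=-\lambda_2^{d}\,y_d$ as the only nonzero bracket, so $\g_{\varphi}^{e_1}(d)$ is the direct sum of the two-dimensional nonabelian Lie algebra with a $(d-1)$-dimensional abelian one. This is not abelian, so the claimed collapse to $\g_0$ fails; and for $d\geq 2$ it is not isomorphic to any $\g_2(c)$ with $c\in\C\setminus\{0,1\}$ either, since its derived subalgebra is one-dimensional whereas that of $\g_2(c)$ is $d$-dimensional (it is the algebra one would call $\g_2(0)$ if the parameter range were extended). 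So this subcase does not land inside the stated dichotomy by any rescaling or symmetry. Note that the paper's own proof passes over exactly this situation by writing ``we may assume $\lambda_1\neq 0$'' after fixing $w=e_1$ (and similarly for $w=a\cdot e_2$), i.e.\ it only treats eigenvectors attached to a nonzero eigenvalue and declares the algebra abelian precisely when the \emph{other} eigenvalue vanishes; your instinct that this degenerate subcase is the main obstacle is therefore well founded, but it must be handled by a direct computation recording the extra isomorphism type (or by restricting the hypothesis), not by the isomorphism symmetry you invoke.
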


\begin{lem} \label{lem5.4}
Let $k$ be any field and $v_1,v_2$ be two linearly independent eigenvectors of a square matrix $A$ associated with two different eigenvalues $\lambda_1$ and $\lambda_2$ respectively. Then $a_1\cdot v_1+a_2\cdot v_2$ can not be an eigenvector of $A$ for all $a_1,a_2\in k^\times$.
\end{lem}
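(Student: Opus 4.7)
The plan is to proceed by contradiction. I would assume that $v := a_1 v_1 + a_2 v_2$ is an eigenvector of $A$ associated with some eigenvalue $\mu \in k$, with both $a_1, a_2 \in k^\times$, and then derive a contradiction with the hypothesis $\lambda_1 \neq \lambda_2$.

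First I would compute $A(v)$ in two different ways. On one hand, using that $v_1, v_2$ are eigenvectors with eigenvalues $\lambda_1, \lambda_2$, linearity of $A$ gives
\[
A(v) = a_1 \lambda_1 v_1 + a_2 \lambda_2 v_2.
\]
On the other hand, the assumption that $v$ is an eigenvector of $A$ with eigenvalue $\mu$ yields
\[
A(v) = \mu v = \mu a_1 v_1 + \mu a_2 v_2.
\]
Equating the two expressions and using the linear independence of $\{v_1, v_2\}$, I would read off the scalar identities $a_1 \lambda_1 = \mu a_1$ and $a_2 \lambda_2 = \mu a_2$. Since $a_1, a_2$ are nonzero elements of the field $k$, I can cancel them to obtain $\mu = \lambda_1$ and $\mu = \lambda_2$ simultaneously, contradicting $\lambda_1 \neq \lambda_2$.

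There is no real obstacle here; the argument is a routine application of the definition of eigenvectors together with linear independence. The only subtlety worth flagging is the hypothesis $a_1, a_2 \in k^\times$, which is precisely what allows the cancellation at the final step and which the statement explicitly requires. I would present the argument in a single short paragraph and conclude that no such $\mu$ exists, so $a_1 v_1 + a_2 v_2$ is not an eigenvector of $A$.
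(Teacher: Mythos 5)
Your proposal is correct and follows essentially the same route as the paper's proof: assume $a_1 v_1 + a_2 v_2$ is an eigenvector, compute its image under $A$ in two ways, invoke linear independence to equate coefficients, and cancel the nonzero scalars $a_1, a_2$ to force $\lambda_1 = \lambda_2$, a contradiction. No differences worth noting.
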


\begin{proof}
Write $v:=a_1\cdot v_1+a_2\cdot v_2$ and assume by the way of contradiction that $v$ is an eigenvector of $A$ associated with 
the eigenvalue $\lambda$. Then $A(v)=A(a_1\cdot v_1+a_2\cdot v_2)=a_1\lambda_1\cdot v_1+a_2\lambda_2\cdot v_2=\lambda\cdot v=a_1\lambda\cdot v_1+a_2\lambda\cdot v_2$. As $v_1$ and $v_2$ are linearly independent, it follows that
$a_1\lambda_1-a_1\lambda=0=a_2\lambda_2-a_2\lambda.$ Since $a_1$ and $a_2$ are nonzero, we see that
$\lambda_1=\lambda=\lambda_2$, which contradicts with the assumption that $\lambda_1\neq\lambda_2$.
\end{proof}

\begin{proof}[Proof of Proposition \ref{prop5.3}]
We first take $w=e_1$ and compute the generating relations of $\g_\varphi^{e_1}(d)$. Observe that $[y_i,y_j]=0$
for all $i,j\in\{1,2,\dots,d\}$ because each $y_i$ involves a positive power of $x_2$ that evaluates on $e_1$ to be zero. Moreover,
$$[y_0,y_i]=y_i(e_1)\cdot \varphi(y_0)-y_0(e_1)\cdot\varphi(y_i)=-\lambda_1^{d-i}\lambda_2^i\cdot y_i$$
for all $i\in\{1,2,\dots,d\}$. As at least one of $\{\lambda_1,\lambda_2\}$ is not zero, we may assume that $\lambda_1\neq 0$
and let $c:=\frac{\lambda_2}{\lambda_1}$. If $\lambda_2=0$, then $\g_\varphi^{e_1}(d)$ is isomorphic to $\g_0$; if $\lambda_2\neq 0$, we may replace $y_0$ by $-\frac{1}{\lambda_1^{d-1}\lambda_2}\cdot y_0$ so that $\g_\varphi^{e_1}(d)$ has the following nonzero generating relations:
$$[y_0,y_i]=c^{i-1}\cdot y_i$$
for all $i\in\{1,2,\dots,d\}$. By Corollary \ref{coro1}, we see that $\g_\varphi^{a\cdot e_1}(d)\cong \g_\varphi^{e_1}(d)\cong\g_2(c)$ for all $a\in\C^\times$.

For the case $w=a\cdot e_2$, we may switch the roles of $e_1$ and $e_2$ and apply a similar argument to see that for all $a\in\C$, $\g_\varphi^{a\cdot e_2}(d)$ is either abelian or isomorphic to $\g_2(c)$.

Now we consider the case $w=a_1\cdot v_1+a_2\cdot v_2$ for some $a_1,a_2\in \C^\times$. By Lemma \ref{lem5.4}, $w$ cannot be an eigenvector of $\varphi$. Hence, in this case, $\g_\varphi^{w}(d)$ doesn't make sense.
\end{proof}

\begin{rem}{\rm
If we extend the range of $c$ in $\g_2(c)$ to $\C^\times$, then $\g_2(1)=\g_1$. Furthermore,
for all $c\in\C\setminus\{0,1\}$, we see that $\g_2(c)$ and $\g_1$ are not isomorphic.
\hbo}\end{rem}

\begin{rem}{\rm
In particular, if $d=2$, then the Lie algebra $\g_2(c)$ is isomorphic to $L^3_a$ in \cite[Section 4]{deG05} for some $a$; in fact, there exists a detailed argument about when $\g_2(c)$ and $L^3_a$ are isomorphic; see \cite[Section 3]{deG05}. 
Moreover, if $d=3$, then $\g_2(c)$ is isomorphic to $M^3_a$ in \cite[Section 5]{deG05} for some $a\in\C$.
\hbo}\end{rem}

\begin{prop}\label{prop5.6}
Let $\varphi=\begin{pmatrix}
   \lambda   &1    \\
     0 &  \lambda
\end{pmatrix}\in M_2(\C)$ for some $\lambda\in\C$ and $w\in\C^2$ be a nonzero vector. 
Then $\g_{\varphi}^w(d)$ is either abelian or isomorphic to  the following Lie algebra:
\begin{equation}\label{g3}\tag{$\g_3(c)$}
[y_0,y_i]=c^{i-1}\cdot \sum_{j=0}^{d-i}{d-i\choose j} c^j\cdot y_{d-j}
\end{equation}
where $i=1,2,\dots,d$ and $c\in\C^\times$.
\end{prop}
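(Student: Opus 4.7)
The plan is to reduce to the case $w=e_1$ via Corollary \ref{coro1} and then compute the brackets $[y_0,y_i]$ by a direct binomial expansion of $\varphi_d^*(y_i)$.

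First I would note that, because $\varphi$ is a single Jordan block with eigenvalue $\lambda$, its eigenspace is one-dimensional and equals $\C\cdot e_1$; hence every eigenvector $w$ of $\varphi$ has the form $a\cdot e_1$ with $a\in\C^\times$, and Corollary \ref{coro1} gives $\g_\varphi^{w}(d)\cong\g_\varphi^{e_1}(d)$. So it suffices to compute with $w=e_1$. Since $y_i(e_1)=0$ for $i\ge 1$ and $y_0(e_1)=1$, formula (\ref{bracket}) immediately yields $[y_i,y_j]=0$ for all $i,j\ge 1$, while $[y_0,y_i]=-\varphi_d^*(y_i)$ for $i\ge 1$. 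Thus the entire Lie bracket is controlled by the single operator $\varphi_d^*$.

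Next, using the identity $\varphi_{V^*}=\varphi_V^T$ recorded in the first example of Section 3, one computes $\varphi^*(x_1)=\lambda x_1+x_2$ and $\varphi^*(x_2)=\lambda x_2$, whence
\[
\varphi_d^*(y_i)=(\lambda x_1+x_2)^{d-i}(\lambda x_2)^i.
\]
Expanding the first factor by the binomial theorem, multiplying through by $\lambda^i x_2^i$, and reindexing by letting $j$ be the exponent of $x_1$ in the resulting monomial, I obtain
\[
\varphi_d^*(y_i)=\lambda^i\sum_{j=0}^{d-i}{d-i\choose j}\lambda^j\,y_{d-j}.
\]

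I would then finish by a case analysis on $\lambda$. If $\lambda=0$, every $\varphi_d^*(y_i)$ with $i\ge 1$ vanishes, so all brackets are trivial and $\g_\varphi^{e_1}(d)\cong\g_0$. If $\lambda\neq 0$, I replace $y_0$ by $\wt{y}_0:=-\lambda^{-1}y_0$; the formula above then rewrites as
\[
[\wt{y}_0,y_i]=\lambda^{i-1}\sum_{j=0}^{d-i}{d-i\choose j}\lambda^j\,y_{d-j},
\]
which is precisely the defining relation of $\g_3(c)$ with $c=\lambda$. The main obstacle is purely bookkeeping: one has to juggle the reindexing of the binomial sum (the passage from the $k$-summation to the $j$-summation used to read off the coefficients of $y_{d-j}$) together with the rescaling $y_0\mapsto\wt{y}_0$, so that the factors $c^{i-1}$ and $c^j$ emerge in exactly the form stated for $\g_3(c)$; no deeper Lie-theoretic input is required.
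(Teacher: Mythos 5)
Your proposal is correct and follows essentially the same route as the paper's proof: reduce to $w=e_1$ via Corollary \ref{coro1} using the one-dimensionality of the eigenspace, observe $[y_i,y_j]=0$ for $i,j\geq 1$ and $[y_0,y_i]=-\varphi_d^*(y_i)$, compute $\varphi_d^*(y_i)=\lambda^i\sum_{j=0}^{d-i}\binom{d-i}{j}\lambda^j y_{d-j}$ by binomial expansion, and split into the cases $\lambda=0$ (abelian) and $\lambda\neq 0$ (rescale $y_0$ by $-\lambda^{-1}$ to obtain $\g_3(\lambda)$). The only difference is that you spell out the binomial bookkeeping that the paper states without detail.
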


\begin{proof}
Since the eigenspace of $\varphi$ associated with $\lambda$ is one-dimensional, we see that
$e_1$ spans the eigenspace and so together with Corollary \ref{coro1} it suffices to consider the case of $w=e_1$.
Note that $\varphi(x_1)=x_2+\lambda\cdot x_1$ and $\varphi(x_2)=\lambda\cdot x_2$. Thus
$$\varphi(y_{i}) = \lambda^{i}\cdot \sum_{j=0}^{d-i}{d-i\choose j} \lambda^j\cdot y_{d-j}$$
for $i=0,1,2,\dots,d$. Since  $[y_i,y_j]=0$ for all $i,j\in\{1,2,\dots,d\}$, it suffices to compute the values of $[y_i,y_0]$
for $i=1,2,\dots,d$. In fact, 
$$[y_0,y_i]=y_i(e_1)\varphi(y_0)-y_0(e_1)\varphi(y_i)=-\varphi(y_i)$$
for all $i\in\{1,2,\dots,d\}$. If $\lambda=0$, then $\g_{\varphi}^{e_1}(d)$ is abelian. Now we may assume that $\lambda\neq 0$
and replace $y_0$ by $-\frac{1}{\lambda}\cdot y_0$. This gives us 
$$[y_0,y_i]=\lambda^{i-1}\cdot \sum_{j=0}^{d-i}{d-i\choose j} \lambda^j\cdot y_{d-j}$$ 
for all $i\in\{1,2,\dots,d\}$. 
\end{proof}

\begin{rem}\label{rem5.8}
{\rm
We also obtain an example 
for which $\varphi=\begin{pmatrix}
  0&1    \\
     0 &0
\end{pmatrix}\neq 0$ and $w=e_1\neq 0$ but $\g_{\varphi}^{w}(d)$ is an abelian Lie algebra. 
\hbo}\end{rem}

Note that $\g_1$ and $\g_2(1)$ are two isomorphic Lie algebras. We may summarize  
the main result in this section as follows. 

\begin{thm}\label{thm3}
Let $n=2$ and $\g$ be a Lie algebra in $\M_d$ over $\C$. Then $\g$ is isomorphic to one of $$\{\g_0,\g_2(c),\g_3(c)\mid c\in\C^\times\}.$$
\end{thm}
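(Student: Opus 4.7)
The plan is to assemble Theorem \ref{thm3} from the three propositions already established in this section, using the Jordan normal form theorem together with Theorem \ref{thm2} to reduce any pair $(\varphi,w)\in M$ to a standard shape.

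First I would invoke the Jordan normal form theorem over $\C$ for $2\times 2$ matrices: every $\varphi\in M_2(\C)$ is conjugate to either a diagonal matrix $\dia\{\lambda_1,\lambda_2\}$ (allowing $\lambda_1=\lambda_2$) or to a single Jordan block $\sm{\lambda}{1}{0}{\lambda}$. Given an arbitrary pair $(\varphi,w)$ with $w$ an eigenvector of $\varphi$, choose $T\in\GL_2(\C)$ so that $\phi:=T\varphi T^{-1}$ is in one of these normal forms; then $v:=T(w)$ is an eigenvector of $\phi$, and by Theorem \ref{thm2}, $\g_\varphi^w(d)\cong\g_\phi^v(d)$. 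This reduces the classification to the three mutually exclusive cases covered by Propositions \ref{prop5.1}, \ref{prop5.3}, and \ref{prop5.6}.

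Next I would treat the three cases in turn. If $\phi=\lambda\cdot I_2$ is scalar, Proposition \ref{prop5.1} shows that $\g_\phi^v(d)$ is either abelian (that is $\g_0$) or isomorphic to $\g_1$; and by the definition of $\g_2(c)$ (with the convention in the remark following Proposition \ref{prop5.3} extending $c$ to $\C^\times$), we have $\g_1=\g_2(1)$, so this case yields $\g_0$ or $\g_2(1)$. If $\phi=\dia\{\lambda_1,\lambda_2\}$ with $\lambda_1\neq \lambda_2$, Proposition \ref{prop5.3} gives $\g_0$ or $\g_2(c)$ for some $c\in\C\setminus\{0,1\}$. If $\phi$ is a nontrivial Jordan block, Proposition \ref{prop5.6} gives $\g_0$ or $\g_3(c)$ for some $c\in\C^\times$. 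Combining the first two cases yields $\g_2(c)$ for every $c\in\C^\times$, and union with the third case produces the stated list $\{\g_0,\g_2(c),\g_3(c)\mid c\in\C^\times\}$.

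Since the bulk of the algebraic work has already been done inside the three propositions, the main task here is really a careful bookkeeping argument: make sure the Jordan reduction lands in one of the three standard forms, that the chosen eigenvector is pushed forward correctly so that Theorem \ref{thm2} applies, and that the convention $\g_2(1)=\g_1$ is used to merge the scalar case with the distinct-eigenvalue case. The only mild subtlety I foresee is the scalar case, where the two-dimensional eigenspace forces one to appeal to transitivity of $\GL_2(\C)$ on $\C^2\setminus\{0\}$ (already exploited inside Proposition \ref{prop5.1}) rather than to a bare Jordan reduction; once that is acknowledged, the theorem follows by a short case analysis with no further computation.
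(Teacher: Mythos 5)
Your proposal is correct and follows essentially the same route as the paper: the paper's proof simply combines Propositions \ref{prop5.1}, \ref{prop5.3}, and \ref{prop5.6}, with the Jordan-form reduction via Theorem \ref{thm2} and the identification $\g_1=\g_2(1)$ already set up in the surrounding text of Section 5. You merely spell out this bookkeeping explicitly, which matches the intended argument.
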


\begin{proof}
Combing Propositions \ref{prop5.1}, \ref{prop5.3}, and \ref{prop5.6} together obtains the result.
\end{proof}

We close this article with the following example that illustrates Corollary \ref{coro2} and tells us how to use
a linear map $\varphi$ and its eigenvector $w$ to construct an infinite-dimensional Lie algebra. 

\begin{exam}{\rm
Let's take $\varphi=-I_2$ and $w=e_1$ as  in Proposition \ref{prop5.1} and consider the infinite-dimensional vector space $\C[x_1,x_2]$. Using Corollary \ref{coro2} and Proposition \ref{prop5.1} together gives us a solvable Lie algebra structure on 
$\C[x_1,x_2]$ determined by the following nontrivial bracket products:
$$\left\{[x_1^d,x_1^{d-i}x_2^i]=x_1^{d-i}x_2^i\mid 1\leqslant i\leqslant d,d\in\N^+\right\}.$$
Note that different pairs $(\varphi,w)$ might obtain non-isomorphic Lie algebra structures on $\C[x_1,x_2]$. \hbo
}\end{exam}

\vspace{2mm}
\noindent \textbf{Acknowledgements}. 
This research was partially supported by the Algoma University under grant No. AURF-PT-40370-71.
The author would like to thank the anonymous referees and the editor for their careful reading, constructive comments, and suggestions. Many thanks go to Emmy Chen and Professor Runxuan Zhang for their support.

%%%%%%%%%%%%%%%%%%%%%%%%%%References%%%%%%%%%%%%%%%%%%%%%%%%
\begin{bibdiv}
  \begin{biblist}
  
  \bib{BNV07}{book}{
   author={Blackburn, Simon R.},
   author={Neumann, Peter M.},
   author={Venkataraman, Geetha},
   title={Enumeration of finite groups},
   series={Cambridge Tracts in Mathematics},
   volume={173},
   publisher={Cambridge University Press, Cambridge},
   date={2007},
   %pages={xii+281},
   %isbn={978-0-521-88217-0},
   %review={\MR{2382539}},
   %doi={10.1017/CBO9780511542756},
}
  
  \bib{CW11}{book}{
   author={Campbell, H. E. A.},
   author={Wehlau, David L.},
   title={Modular invariant theory},
   series={Encyclopaedia of Mathematical Sciences},
   volume={139},
   %note={Invariant Theory and Algebraic Transformation Groups, 8},
   publisher={Springer-Verlag, Berlin},
   date={2011},
 %  pages={xiv+233},
 %  isbn={978-3-642-17403-2},
  % review={\MR{2759466}},
 %  doi={10.1007/978-3-642-17404-9},
}

\bib{CCSW19}{article}{
   author={Campbell, H. E. A.},
   author={Chuai, J.},
   author={Shank, R. James},
   author={Wehlau, David L.},
   title={Representations of elementary abelian $p$-groups and finite
   subgroups of fields},
   journal={J. Pure Appl. Algebra},
   volume={223},
   date={2019},
   number={5},
   pages={2015--2035},
  % issn={0022-4049},
  % review={\MR{3906538}},
 %  doi={10.1016/j.jpaa.2018.08.013},
}
  
  \bib{CCZ21}{article}{
   author={Chang, Hongliang},
   author={Chen, Yin},
   author={Zhang, Runxuan},
   title={A generalization on derivations of Lie algebras},
   journal={Electron. Res. Arch.},
   volume={29},
   date={2021},
   number={3},
   pages={2457--2473},
  % review={\MR{4256483}},
  % doi={10.3934/era.2020124},
}

\bib{Che18}{article}{
   author={Chen, Yin},
   title={Vector invariants for two-dimensional orthogonal groups over
   finite fields},
   journal={Monatsh. Math.},
   volume={187},
   date={2018},
   number={3},
   pages={479--497},
  % issn={0026-9255},
 %  review={\MR{3858427}},
 %  doi={10.1007/s00605-017-1111-5},
}

\bib{CDG20}{article}{
   author={Chen, Yin},
   author={Du, Rong},
   author={Gao, Yun},
   title={Modular quotient varieties and singularities by the cyclic group
   of order $2p$},
   journal={Comm. Algebra},
   volume={48},
   date={2020},
   number={12},
   pages={5490--5500},
  % issn={0092-7872},
  % review={\MR{4150754}},
 %  doi={10.1080/00927872.2020.1791148},
}

\bib{CSW21}{article}{
   author={Chen, Yin},
   author={Shank, R. James},
   author={Wehlau, David L.},
   title={Modular invariants of finite gluing groups},
   journal={J. Algebra},
   volume={566},
   date={2021},
   pages={405--434},
   issn={0021-8693},
 %  review={\MR{4153231}},
%   doi={10.1016/j.jalgebra.2020.08.034},
}
  
 \bib{CZ23}{article}{
   author={Chen, Yin},
   author={Zhang, Runxuan},
   title={A commutative algebra approach to multiplicative Hom-Lie algebras},
   journal={Linear Multilinear Algebra},
   volume={71},
   date={2023},
   number={7},
   pages={1127--1144},
   issn={0308-1087},
  % review={\MR{4581939}},
  % doi={10.1080/03081087.2022.2052005},
} 

\bib{DK15}{book}{
   author={Derksen, Harm},
   author={Kemper, Gregor},
   title={Computational invariant theory},
   series={Encyclopaedia of Mathematical Sciences},
   volume={130},
   edition={Second enlarged edition},
  % note={With two appendices by Vladimir L. Popov, and an addendum by Norbert A'Campo and Popov;
   %Invariant Theory and Algebraic Transformation Groups, VIII},
   publisher={Springer, Heidelberg},
   date={2015},
   %pages={xxii+366},
  % isbn={978-3-662-48420-3},
   %isbn={978-3-662-48422-7},
  % review={\MR{3445218}},
   %doi={10.1007/978-3-662-48422-7},
}
  
\bib{deG05}{article}{
   author={de Graaf, Willem A.},
   title={Classification of solvable Lie algebras},
   journal={Experiment. Math.},
   volume={14},
   date={2005},
   number={1},
   pages={15--25},
   issn={1058-6458},
 %  review={\MR{2146516}},
}

\bib{deG07}{article}{
   author={de Graaf, Willem A.},
   title={Classification of 6-dimensional nilpotent Lie algebras over fields
   of characteristic not 2},
   journal={J. Algebra},
   volume={309},
   date={2007},
   number={2},
   pages={640--653},
   issn={0021-8693},
  % review={\MR{2303198}},
  % doi={10.1016/j.jalgebra.2006.08.006},
}

\bib{DJ23}{article}{
   author={Dobrogowska, Alina},
   author={Jakimowicz, Grzegorz},
   title={A new look at Lie algebras},
   journal={J. Geom. Phys.},
  volume={192},
   date={2023},
 %  number={2},
   pages={Paper No. 104959, 17 pages},
}

\bib{DZ04}{article}{
   author={Dokovic, Dragomir},
   author={Zhao, Kaiming},
   title={Invariant theory for non-associative real two-dimensional algebras
   and its applications},
   journal={Transform. Groups},
   volume={9},
   date={2004},
   number={1},
   pages={3--23},
   issn={1083-4362},
  % review={\MR{2130600}},
 %  doi={10.1007/s00031-004-7001-7},
}

\bib{GK96}{book}{
   author={Goze, Michel},
   author={Khakimdjanov, Yusupdjan},
   title={Nilpotent Lie algebras},
   series={Mathematics and its Applications},
   volume={361},
   publisher={Kluwer Academic Publishers Group, Dordrecht},
   date={1996},
   pages={xvi+336},
   isbn={0-7923-3932-0},
  % review={\MR{1383588}},
  % doi={10.1007/978-94-017-2432-6},
}

  \end{biblist}
\end{bibdiv}
\raggedright
\end{document}